%%%%%%%%%%%%%%%%%%%%%%%%%%%%%%%%%%%%%%%%%%%%%%%%%%%%%%%%%%%%%%%%%%%%%%%%%%%%%%%%%%%%%%%%%%%%%%%%%%%%%
%                                                                                                   %
%                         Randers Ricci Soliton Homogeneous Nilmanifolds                            %
%                                                                                                   %
%                                                                                                   %
%                                  Hamid Reza Salimi Moghaddam                                      %
%%%%%%%%%%%%%%%%%%%%%%%%%%%%%%%%%%%%%%%%%%%%%%%%%%%%%%%%%%%%%%%%%%%%%%%%%%%%%%%%%%%%%%%%%%%%%%%%%%%%%

\documentclass[11pt]{amsart}
\usepackage{amssymb}
\usepackage[margin=1in]{geometry}
\usepackage[colorlinks]{hyperref}
\oddsidemargin=.7in \evensidemargin=.7in

\textwidth=6.2in

\addtolength{\oddsidemargin}{-.70in}
\setlength{\evensidemargin}{\oddsidemargin}

%\addtolength{\topmargin}{-0.5in}

\newtheorem{theorem}{Theorem}[section]

\newtheorem{prop}[theorem]{Proposition}
\newtheorem{cor}[theorem]{Corollary}

\theoremstyle{definition}
\newtheorem{definition}[theorem]{Definition}

\theoremstyle{remark}
\newtheorem{remark}[theorem]{Remark}

\numberwithin{equation}{section}
\begin{document}

\newcommand{\spacing}[1]{\renewcommand{\baselinestretch}{#1}\large\normalsize}
\spacing{1.14}

\title{Randers Ricci Soliton Homogeneous Nilmanifolds}

\author {H. R. Salimi Moghaddam}

\address{Department of Pure Mathematics \\ Faculty of  Mathematics and Statistics\\ University of Isfahan\\ Isfahan\\ 81746-73441-Iran.} \email{hr.salimi@sci.ui.ac.ir and salimi.moghaddam@gmail.com}

\keywords{Homogeneous nilmanifold, Randers metric, Ricci soliton, Ricci flow.\\
AMS 2010 Mathematics Subject Classification: 53C30, 53C60, 53C21, 22E25.}

%%\date{\today}

\begin{abstract}
    Let $F$ be a left invariant Randers metric on a simply connected nilpotent Lie
    group $N$, induced by a left invariant Riemannian metric ${\hat{\textbf{\textit{a}}}}$ and a vector field $X$ which is $I_{\hat{\textbf{\textit{a}}}}(M)$-invariant. If the Ricci flow equation has a unique solution then, $(N,F)$ is a Ricci soliton if and only if $(N,F)$ is a semialgebraic Ricci soliton.
\end{abstract}

\maketitle
%%---------------------------INTRODUCTION--------------------------

\section{\textbf{Introduction}}
In 1982, R. S. Hamilton has shown that any compact Riemannian $3-$manifold with strictly positive Ricci curvature, admits a Riemannian metric of
constant positive curvature (see \cite{Hamilton}). He has started with a Riemannian manifold $(M,{\hat{\textbf{\textit{a}}}}_0)$ and evolved its metric by the following geometric evolution equation which is named un-normalized Ricci flow
\begin{equation}\label{un-normalized Ricci flow equation}
    \frac{\partial}{\partial t}{\hat{\textbf{\textit{a}}}}_{ij}=-2ric_{ij},
\end{equation}
where $ric_{ij}$ denotes the Ricci curvature tensor of the Riemannian metric ${\hat{\textbf{\textit{a}}}}$. For a compact Riemannian manifold, one can consider the following equation
\begin{equation}\label{normalized Ricci flow equation}
    \frac{\partial}{\partial t}{\hat{\textbf{\textit{a}}}}_{ij}=\frac{2}{n}sc({\hat{\textbf{\textit{a}}}}){\hat{\textbf{\textit{a}}}}_{ij}-2ric_{ij},
\end{equation}
where $\dim M=n$ and $sc({\hat{\textbf{\textit{a}}}})$ is the average of scalar curvature (\cite{Hamilton}). The last equation is called the normalized Ricci flow which is different from un-normalized Ricci flow \ref{un-normalized Ricci flow equation} by a change of scale in space and a change of parametrization in time. In this case the volume of the solution is constant with respect to time (for more details see \cite{Hamilton}). A Riemannian metric is a fixed point of \ref{un-normalized Ricci flow equation} if and only if it is Ricci flat. For a compact manifold, a Riemannian metric is a fixed point of \ref{normalized Ricci flow equation} if and only if it is Einstein (see \cite{Chow-Knopf}). There is a larger class of solutions, called self-similar solutions or Ricci solitons, which is considered as a generalization of fixed points. Let $(M,{\hat{\textbf{\textit{a}}}}(t))$ be a solution of \ref{un-normalized Ricci flow equation} on a time interval $(t_1,t_2)$ containing zero with ${\hat{\textbf{\textit{a}}}}_0={\hat{\textbf{\textit{a}}}}(0)$. Then ${\hat{\textbf{\textit{a}}}}(t)$ is a self-similar solution if there exist scalars $\tau(t)$ and diffeomorphisms $\phi_t$ such that ${\hat{\textbf{\textit{a}}}}(t)=\tau(t)\phi_t^\ast{\hat{\textbf{\textit{a}}}}_0$. On the other hand a Riemannian manifold $(M,{\hat{\textbf{\textit{a}}}}_0)$ is named a Ricci soliton if
\begin{equation}\label{Ricci soliton equation}
    \mathcal{L}_X{\hat{\textbf{\textit{a}}}}_0=2(\lambda {\hat{\textbf{\textit{a}}}}_0-ric({\hat{\textbf{\textit{a}}}}_0)),
\end{equation}
where $\mathcal{L}_X{\hat{\textbf{\textit{a}}}}$ denotes the Lie derivative of ${\hat{\textbf{\textit{a}}}}$ with respect to a vector field $X$. A Ricci soliton is called shrinking if $\lambda>0$, steady if $\lambda=0$ and expanding if $\lambda<0$. It can be shown that there exists a bijection between the family of self-similar solutions and the family of Ricci solitons (see lemma 2.4 of \cite{Chow-Knopf}). This fact allows us to consider them as equivalent.

D. Bao, in \cite{Bao}, has used Akbar-Zadeh's Ricci tensor \cite{Akbar-Zadeh}, to generalize the Ricci flow to Finsler geometry which is a natural extension of Riemannian geometry. In \cite{Bidabad-Yarahmadi}, the concept of Ricci soliton is developed to Finsler geometry by B. Bidabad and M. Yarahmadi.

In 1941 an important family of Finsler metrics, which are called Randers metrics now, introduced by G. Randers for its application in general relativity (see \cite{Randers}). Also, they are the only Finsler metrics which are the solutions of the navigation problem on a Riemannian manifold. In fact the perturbation of a Riemannian metric ${\hat{\textbf{\textit{a}}}}$ by a vector field $W$, with $\Vert W\Vert_{\hat{\textbf{\textit{a}}}} < 1$, generates a Randers metric and conversely every Randers metric can be accomplished through the perturbation of a Riemannian metric ${\hat{\textbf{\textit{a}}}}$ by a vector field $W$ satisfying $\Vert W\Vert_{\hat{\textbf{\textit{a}}}} < 1$ (see \cite{Bao-Robles-Shen}). The importance of Randers spaces, among Finsler manifolds, leads us to study Ricci solitons on them. In this work, we study the conditions under which a left invariant Randers metric on a simply connected nilpotent Lie group $N$, is a Ricci soliton.

In \cite{Lauret}, J. Lauret has shown that a Riemannian nilmanifold  $(N,{\hat{\textbf{\textit{a}}}})$ is a Ricci soliton if and only if it's $(1,1)$ Ricci tensor satisfies the equation
\begin{equation}\label{Riemannian semialgebraic Ricci soliton equation}
    Ric_{\hat{\textbf{\textit{a}}}}=c\mathrm{Id}+\frac{1}{2}(\mathrm{D}+\mathrm{D}^t),
\end{equation}
for some $c\in\mathbb{R}$ and some $\mathrm{D}\in Der(\frak{n})$, where $N$ is a nilpotent Lie group with Lie algebra $\frak{n}$, ${\hat{\textbf{\textit{a}}}}$ is a left invariant Riemannian metric on $N$ and $\mathrm{D}^t$ is the transpose of $\mathrm{D}$ with respect to ${\hat{\textbf{\textit{a}}}}$. In the general case, a homogeneous Riemannian manifold is called semi-algebraic if it satisfies the homogeneous space version of the above equation (for more details see \cite{Jablonski1}). In this case if $\mathrm{D}$ is symmetric then the homogeneous Riemannian manifold is called algebraic Ricci soliton. Recently, M. Jablonski has proved that every homogeneous Riemannian Ricci soliton must be semi-algebraic \cite{Jablonski2}. He has completed his result in his next paper \cite{Jablonski1}, by showing that every semi-algebraic Ricci soliton is algebraic. This means that homogeneous Riemannian Ricci solitons are algebraic.

We say that a vector field $X$ on a Riemannian manifold $(M,{\hat{\textbf{\textit{a}}}})$ is $I_{\hat{\textbf{\textit{a}}}}(M)$-invariant if for any $\phi\in I_{\hat{\textbf{\textit{a}}}}(M)$ and $x\in M$ we have $\phi_\ast (X_x)=X_{\phi(x)}$. In this article we generalize the definition of semi-algebraic Ricci soliton to Finsler spaces. We show that for a left invariant Randers metric $F$ induced by a left invariant Riemannian metric ${\hat{\textbf{\textit{a}}}}$ and an $I_{\hat{\textbf{\textit{a}}}}(M)$-invariant vector field $X$, on a simply connected nilpotent Lie group $N$, the Finslerian nilmanifold $(N,F)$ is a Ricci soliton if and only if $(N,F)$ is a semialgebraic Ricci soliton. This is a generalization of Lauret's result in \cite{Lauret} to Randers spaces.

%%---------------------------Preleminiaries--------------------------

\section{\textbf{Preliminaries}}
In this section we review some fundamental definitions and concepts of Finsler geometry. For more details we refer the readers to \cite{Bao-Chern-Shen} and \cite{Deng-Book}.
\begin{definition}
A Finsler manifold $(M,F)$ is a smooth manifold $M$ equipped with a map (which is called a Finsler metric) $F:TM \longrightarrow \left[ 0, \infty \right)$ such that
\begin{itemize}
\item[i)] $F$ be a smooth function on $TM\backslash \{0\}$,
\item[ii)] $F(x,\lambda y)=\lambda F(x,y)$, \ \ \ \ for all $\quad  \lambda>0$,
\item[iii)] The hessian matrix $(g_{ij})=\left( \dfrac{1}{2} \dfrac{\partial ^2 F^2}{\partial y^i \partial y^j}\right) $ be positive definite for all $(x,y) \in TM\backslash \{0\}$.
\end{itemize}
\end{definition}
An important special type of Finsler metrics is Randers metric. This metric is of the form $F(x,y)=\sqrt{{\hat{\textbf{\textit{a}}}}\left( y ,y \right)}+\beta(x,y)$, where ${\hat{\textbf{\textit{a}}}}\left( y ,y \right)={\hat{\textbf{\textit{a}}}}_{ij}y^iy^j$ and  $\beta(x,y)=b_iy^i$ are a Riemannian metric and a $1-$form on $M$, respectively, such that $\Vert \beta _x\Vert _{\alpha}=\sqrt{{\hat{\textbf{\textit{a}}}}^{ij} (x)b_i(x) b_j(x) } < 1$ (\cite{Chern-Shen}). Using the induced inner product by the Riemannian metric ${\hat{\textbf{\textit{a}}}}$ on any cotangent space $T^*_xM$, the 1-form $\beta$ corresponds to a vector field $X$ on $M$ such that
\begin{equation}
{\hat{\textbf{\textit{a}}}}(y,X(x)) = \beta (x,y).
\end{equation}
Therefore we can consider the Randers metrics as follows
\begin{equation}\label{Randers}
F(x,y)=\sqrt{{\hat{\textbf{\textit{a}}}}\left( y ,y \right)}+{\hat{\textbf{\textit{a}}}}\left( X\left( x\right) ,y\right).
\end{equation}

For any Finsler manifold $(M,F)$ we define
\begin{equation}\label{Cartan coefficients}
    C_{ijk}:=\frac{1}{4}(F^2)_{y^iy^jy^k}.
\end{equation}

Suppose that $(M, F)$ is an arbitrary Finsler manifold and $(x^1, x^2, \cdots , x^n)$ is a local coordinate system on an open
subset $U$ of $M$. The fundamental tensor and the Cartan tensor, are defined as follows, respectively
\begin{eqnarray}
% \nonumber to remove numbering (before each equation)
  g_{(x,y)}(u,v) &=& g_{ij}(x,y)u^iv^j \label{Fundamental tensor}\\
  C_{(x,y)}(u,v,w) &=& C_{ijk}(x,y)u^iv^jw^k \label{Cartan tensor},
\end{eqnarray}
for any $(x,y)\in TM\backslash \{0\}$, where $u=u^i\frac{\partial}{\partial x^i}|_x$, $v=v^i\frac{\partial}{\partial x^i}|_x$ and $w=w^i\frac{\partial}{\partial x^i}|_x$. We define $C^i_{jk}:=g^{is}C_{sjk}$, where $(g^{ij})$ denotes the inverse of the matrix $(g_{ij})$. Now we can have the Christoffel symbols of the second kind, on $TU\backslash \{0\}$, as follows
\begin{equation}\label{Christoffel symbols of the second kind}
    \gamma^i_{jk}:=\frac{1}{2}g^{is}(\frac{\partial g_{sj}}{\partial x^k}-\frac{\partial g_{jk}}{\partial x^s}+\frac{\partial g_{ks}}{\partial x^j}).
\end{equation}
We also define the nonlinear connection $N^i_j$ by the following equation
\begin{equation}\label{nonlinear connection}
    N^i_j:=\gamma^i_{jk}y^k-C^i_{jk}\gamma^k_{rs}y^ry^s,
\end{equation}
where $y=y^i\frac{\partial}{\partial x^i}$.
Now we have the following theorem.
\begin{theorem}\label{Chern connection}
(see \cite{Bao-Chern-Shen})
Suppose that $(M,F)$ is a Finsler manifold. The pull-back bundle $\pi^\ast TM$ admits a unique torsion free linear
connection $\nabla$, called the Chern connection, which is almost g-compatible. The coefficients of the connection are of the form
\begin{equation*}
    \Gamma^i_{jk}=\frac{1}{2}g^{is}(\frac{\delta g_{sj}}{\delta x^k}-\frac{\delta g_{jk}}{\delta x^s}+\frac{\delta g_{ks}}{\delta x^j}),
\end{equation*}
where $\frac{\delta}{\delta x^j}:=\frac{\partial}{\partial x^j}-N^i_j\frac{\partial}{\partial y^i}$.
\end{theorem}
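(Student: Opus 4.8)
The plan is to follow the classical fundamental-lemma strategy for the Levi--Civita connection, transplanted to the pull-back bundle $\pi^\ast TM$ over $TM\backslash\{0\}$, with the role of ordinary partial differentiation played by the horizontal frame $\frac{\delta}{\delta x^j}=\frac{\partial}{\partial x^j}-N^i_j\frac{\partial}{\partial y^i}$ determined by the nonlinear connection \eqref{nonlinear connection}. A linear connection $\nabla$ on $\pi^\ast TM$ is encoded by its connection $1$-forms $\omega^i_j$ relative to the natural frame $\{\partial/\partial x^i\}$, i.e. $\nabla(\partial/\partial x^j)=\omega^i_j\otimes(\partial/\partial x^i)$. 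Writing $\omega^i_j$ in the coframe $\{dx^k,\delta y^k\}$ dual to the horizontal--vertical splitting, where $\delta y^k=dy^k+N^k_s\,dx^s$, gives the most general form $\omega^i_j=\Gamma^i_{jk}\,dx^k+\widetilde{\Gamma}^i_{jk}\,\delta y^k$. The first step is to translate the two defining requirements into algebraic conditions on $\Gamma^i_{jk}$ and $\widetilde{\Gamma}^i_{jk}$: torsion-freeness forces the vertical coefficients to vanish, $\widetilde{\Gamma}^i_{jk}=0$, together with the symmetry $\Gamma^i_{jk}=\Gamma^i_{kj}$; almost $g$-compatibility is read off as the tensorial identity
\[ dg_{ij}-g_{kj}\,\omega^k_i-g_{ik}\,\omega^k_j=2C_{ijk}\,\delta y^k. \]

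Second, I would exploit that the metric depends on the fibre variable only through the Cartan tensor: since $C_{ijk}=\tfrac12\partial g_{ij}/\partial y^k$ by \eqref{Cartan coefficients}, the differential splits as $dg_{ij}=\frac{\delta g_{ij}}{\delta x^k}\,dx^k+2C_{ijk}\,\delta y^k$ in the coframe $\{dx^k,\delta y^k\}$. Substituting this and $\omega^i_j=\Gamma^i_{jk}\,dx^k$ into the compatibility identity, the vertical $2C_{ijk}\,\delta y^k$ terms cancel on both sides, and comparing the horizontal $dx^k$ components yields the Finslerian analogue of metric compatibility,
\[ \frac{\delta g_{ij}}{\delta x^k}=g_{lj}\,\Gamma^l_{ik}+g_{il}\,\Gamma^l_{jk}. \]
With this relation and the symmetry $\Gamma^l_{ab}=\Gamma^l_{ba}$ in hand, the coefficients are pinned down by the Koszul--Christoffel permutation trick: forming the alternating combination $\frac{\delta g_{sj}}{\delta x^k}-\frac{\delta g_{jk}}{\delta x^s}+\frac{\delta g_{ks}}{\delta x^j}$ and cancelling the symmetric pairs leaves exactly $2g_{sl}\Gamma^l_{jk}$, so contracting with $\tfrac12 g^{is}$ reproduces the stated formula. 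This computation simultaneously establishes uniqueness, since the two axioms leave no freedom in the $\Gamma^i_{jk}$.

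For existence, I would define $\Gamma^i_{jk}$ by the displayed formula and verify the axioms directly: symmetry in $j,k$ is immediate from $g_{ab}=g_{ba}$, and re-substituting the formula into the horizontal compatibility relation recovers $\frac{\delta g_{ij}}{\delta x^k}$ after the off-diagonal terms cancel in pairs, so the resulting connection is torsion-free and almost $g$-compatible. The main obstacle is not this final algebra, which is routine, but the first step: correctly formulating torsion-freeness and almost $g$-compatibility on $\pi^\ast TM$ and proving that torsion-freeness is equivalent to the vanishing of the vertical coefficients together with the symmetry of the horizontal ones. This rests on the structure of the pull-back bundle and its canonical soldering form, and on the precise role of the nonlinear connection $N^i_j$ in \eqref{nonlinear connection}; once these identifications are made, the argument reduces to the Riemannian fundamental lemma with $\partial/\partial x^k$ replaced throughout by $\delta/\delta x^k$. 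A complete account is given in \cite{Bao-Chern-Shen}.
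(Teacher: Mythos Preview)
The paper does not supply its own proof of this theorem: it is recorded in the preliminaries with the citation ``(see \cite{Bao-Chern-Shen})'' and no argument, so there is nothing in the paper to compare your proposal against. Your outline is the standard fundamental-lemma argument carried out in \cite{Bao-Chern-Shen}, and since you explicitly defer the delicate point (the equivalence of torsion-freeness with the vanishing of the vertical coefficients plus symmetry of the horizontal ones) to that same reference, your proposal is consistent with how the paper treats the result.
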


The concept of sectional curvature of Riemannian geometry is generalized to Finsler geometry which is called flag curvature.
The flag curvature is defined by
\begin{equation}\label{flag curvature main formula}
K(P,(x,y))=\dfrac{g_{(x,y)} \left( R_{(x,y)} \left( u\right) ,u\right) }{g_{(x,y)} (y,y) g_{(x,y)} (u,u)-g_{(x,y)}^2 (y,u)},
\end{equation}
where $P=\textit{span} \lbrace u,y \rbrace $ and $R$ is the Riemann curvature tensor defined by $R(u,v)w=\nabla _u \nabla _v w -\nabla _v \nabla _u w- \nabla _{[u,v]}w$ (for more details see \cite{Bao-Chern-Shen}).\\
Now for any $(x,y)\in TM\backslash \{0\}$ we define a linear transformation from $T_xM$ to itself by $R_{(x,y)}:=R(.,y)y$.

Then the Ricci curvature is defined as the trace of $R_{(x,y)}$. So we have
\begin{equation}\label{Ricci curvature}
    \mathcal{R}ic(x,y)=F^2(x,y)\sum_{i=1}^{n-1}K(P_i,(x,y)),
\end{equation}
where $P_i:=\textit{span} \lbrace e_i,y \rbrace$ and $\{e_1,\cdots,e_{n-1},\frac{y}{F(x,y)}\}$ is a $g_{(x,y)}$-orthonormal basis of $T_xM$.\\
Akbar-zadeh's Ricci tensor of a Finsler metric $F$ is defined as follows (see \cite{Akbar-Zadeh})
\begin{equation}\label{Akbar-zadeh's Ricci tensor}
    (ric_F)_{ij}:=\frac{1}{2}(\mathcal{R}ic(x,y))_{y_iy_j}.
\end{equation}

The following equation is considered by D. Bao, as the natural extension of un-normalised Ricci flow in Finsler geometry (see \cite{Bao}),
\begin{equation}\label{Ricci flow in Finsler geometry}
    \frac{\partial g_t}{\partial t}=-2 ric_{F_t}, \ \ \ \ F(t=0)=F_0.
\end{equation}
The concept of Ricci soliton is generalized to Finsler geometry by B. Bidabad and M. Yarahmadi as follows (see \cite{Bidabad-Yarahmadi}).\\
Suppose that $V$ is a vector field on the manifold $M$, such that $V=V^i\frac{\partial}{\partial x^i}$, for a local coordinate system. Then the complete lift of $V$ on $TM$, which is denoted by $V^c$, is a vector field on $TM$ locally defined by $V^c=V^i\frac{\partial}{\partial x^i}+y^j\frac{\partial V^i}{\partial x^j}\frac{\partial}{\partial y^i}$.

\begin{definition}\label{Finslerian Ricci soliton}
A Finsler manifold $(M,F)$ is said to be a Finslerian Ricci soliton if it satisfies the following equation,
\begin{equation}\label{Finslerian Ricci soliton equation}
    ric_F=cg+L_{V^c}g,
\end{equation}
for some vector field $V\in\mathfrak{X}(M)$, where $V^c$ denotes the complete lift of $V$ on $TM$ and $g$ is the Hessian related to $F$ (see \cite{Bidabad-Yarahmadi}).
\end{definition}
In theorem 3.4 of \cite{Bidabad-Yarahmadi}, the authors showed that if $(M,F_0)$ is a Finslerian Ricci soliton on a compact manifold $M$, then there exists a solution $(M,F_t)$ in the form
\begin{equation}\label{Ricci flow solution}
    F^2_t=\tau(t)\tilde{\varphi}^\ast_t F_0^2,
\end{equation}
to the Ricci flow equation \ref{Ricci flow in Finsler geometry}. Conversely, if $F_t$ of the form \ref{Ricci flow solution} is a solution to the Ricci flow equation \ref{Ricci flow in Finsler geometry}, then there exists a vector field $V$ on $M$ such that $(M,F_0)$ is a Finslerian Ricci soliton in the sense of definition \ref{Finslerian Ricci soliton}.

Here we mention that, similar to the Riemannian case, a Finsler metric $F$ on a Lie group $G$ is named left invariant if
\begin{equation}
F(x,y)=F(e,(l_{x^{-1}})_{\ast}y), \ \ \ \forall x\in G, y\in T_x G,
\end{equation}
where $e$ denotes the unit element of $G$.
%%---------------------------section 3--------------------------

\section{Main Results}
Here we generalize the concept of homogeneous Riemannian nilmanifold to Finsler spaces.
\begin{definition}
Let $(M,F)$ be a connected Finsler manifold with the Lie group of isometries $G=I_F(M)$. $M$ is said to be a homogeneous Finsler nilmanifold if $G$ contains a nilpotent Lie subgroup which acts transitively on $M$.
\end{definition}

\begin{remark}
Recall that for a Finsler manifold $(M,F)$, the group of isometries of $M$ is a Lie transformation group with respect to the compact-open topology (see \cite{Deng-Book} and \cite{Deng-Hou1}).
\end{remark}
The following proposition is a natural generalization of theorem $2$ of \cite{Wilson}.
\begin{prop}\label{isometry group-Finsler nilmanifolds}
Let $(M,F)$ be a Finsler manifold such that $I_F(M)\subset I_{\hat{\textbf{\textit{a}}}}(M)$, for a Riemannian metric ${\hat{\textbf{\textit{a}}}}$ on $M$, where $I_{\hat{\textbf{\textit{a}}}}(M)$ denotes the Lie group of isometries of $(M,{\hat{\textbf{\textit{a}}}})$. Suppose that $(M,F)$ is a homogeneous Finsler nilmanifold and $H$ is a nilpotent Lie subgroup of $I_F(M)$ acting transitively on $M$ with $N$ the connected component of the identity in $H$. Then
\begin{enumerate}
  \item $N$ acts simply transitively on $M$ and $N\unlhd G:=I_{\hat{\textbf{\textit{a}}}}(M)$,
  \item $\frak{n}=\frak{nil(g)}$, where $\frak{n}$ and $\frak{g}$ denote the Lie algebra of $N$ and $G$, respectively and $\frak{nil(g)}$ denotes the nilradical of $\frak{g}$,
  \item For any point $p_0\in M$, $G$ is the semi-direct product of $N$ and the isotropy subgroup $K_{p_0}=\{k\in G: kp_0=p_0\}$,
  \item $N$ is the unique nilpotent subgroup of $G$ acting simply transitively on $G$.
\end{enumerate}
\end{prop}

\begin{remark}
Let $F(x,y)=\sqrt{{\hat{\textbf{\textit{a}}}}(y,y)}+{\hat{\textbf{\textit{a}}}}(X_x,y)$ be a Randers metric on a Lie group $G$, arising from a left invariant Riemannian metric ${\hat{\textbf{\textit{a}}}}$ and a vector field $X$. In \cite{Salimi Moghaddam} (proposition 3.6) we have shown that, $F$ is left invariant if and only if $X$ is left invariant.
\end{remark}

\begin{prop}\label{isometry group of Randers spaces}
Let $F(x,y)=\sqrt{{\hat{\textbf{\textit{a}}}}(y,y)}+{\hat{\textbf{\textit{a}}}}(X_x,y)$ be a Randers metric on an arbitrary connected manifold $M$. Suppose that $I_F(M)$ and $I_{\hat{\textbf{\textit{a}}}}(M)$ denote the Lie groups of isometries of $(M,F)$ and $(M,{\hat{\textbf{\textit{a}}}})$, respectively. Then, $I_F(M)=I_{\hat{\textbf{\textit{a}}}}(M)$ if and only if $X$ is an $I_{\hat{\textbf{\textit{a}}}}(M)$-invariant vector field.
\end{prop}
\begin{proof}
Proposition 1.3 of \cite{Deng-Hou2} shows that, for an arbitrary Randers metric $F$, $I_F(M)\subseteq I_{\hat{\textbf{\textit{a}}}}(M)$. Suppose that $X$ is an $I_{\hat{\textbf{\textit{a}}}}(M)$-invariant vector field of $(M,{\hat{\textbf{\textit{a}}}})$ and $\phi\in I_{\hat{\textbf{\textit{a}}}}(M)$, so we have
\begin{eqnarray*}
% \nonumber to remove numbering (before each equation)
  F(\phi(x),\phi_\ast(y)) &=& \sqrt{{\hat{\textbf{\textit{a}}}}(\phi_\ast(y),\phi_\ast(y))}+{\hat{\textbf{\textit{a}}}}(X_{\phi(x)},\phi_\ast(y)) \\
                          &=& \sqrt{{\hat{\textbf{\textit{a}}}}(\phi_\ast(y),\phi_\ast(y))}+{\hat{\textbf{\textit{a}}}}(\phi_\ast(X_x),\phi_\ast(y)) \\
                          &=& \sqrt{{\hat{\textbf{\textit{a}}}}(y,y)}+{\hat{\textbf{\textit{a}}}}(X_x,y)=F(x,y),
\end{eqnarray*}
which shows that $\phi\in I_F(M)$.\\
Conversely, suppose that $I_F(M)=I_{\hat{\textbf{\textit{a}}}}(M)$. If $\phi\in I_F(M)$ then
\begin{equation}\label{isometry eq1}
    F(x,y)=F(\phi(x),\phi_\ast(y))=\sqrt{{\hat{\textbf{\textit{a}}}}(\phi_\ast(y),\phi_\ast(y))}+{\hat{\textbf{\textit{a}}}}(X_{\phi(x)},\phi_\ast(y)).
\end{equation}
On the other hand $\phi\in I_{\hat{\textbf{\textit{a}}}}(M)$, so we have
\begin{equation}\label{isometry eq2}
    F(x,y)=\sqrt{{\hat{\textbf{\textit{a}}}}(\phi_\ast(y),\phi_\ast(y))}+{\hat{\textbf{\textit{a}}}}(\phi_\ast(X_x),\phi_\ast(y)).
\end{equation}
Hence for any $y\in T_xM$ we have
\begin{equation*}
    {\hat{\textbf{\textit{a}}}}(\phi_\ast(X_x)-X_{\phi(x)},\phi_\ast(y))=0.
\end{equation*}
So the proof is completed.
\end{proof}

\begin{cor}
Suppose that $F(x,y)=\sqrt{{\hat{\textbf{\textit{a}}}}(y,y)}+{\hat{\textbf{\textit{a}}}}(X_x,y)$ is a Randers metric on an arbitrary connected manifold $M$ such that $X$ is an $I_{\hat{\textbf{\textit{a}}}}(M)$-invariant vector field. Then $(M,F)$ is a homogeneous Finsler nilmanifold if and only if $(M,{\hat{\textbf{\textit{a}}}})$ is a homogeneous Riemannian nilmanifold.
\end{cor}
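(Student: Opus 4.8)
The plan is to derive the statement directly from Theorem~\ref{isometry group of Randers spaces}. The hypotheses of that theorem --- $M$ connected and $X$ a complete Killing vector field of $(M,{\hat{\textbf{\textit{a}}}})$ --- are exactly what is assumed here (and admissibility of the Randers metric already forces $\Vert X\Vert_{\hat{\textbf{\textit{a}}}}<1$), so it applies and yields the equality of isometry groups $I_F(M)=I_{\hat{\textbf{\textit{a}}}}(M)$ as Lie transformation groups of $M$, with respect to the compact-open topology. Thus the groups of isometries of the Finslerian and the Riemannian structures literally coincide.

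Next I would observe that both notions of ``homogeneous nilmanifold'' are phrased purely in terms of the isometry group: $(M,F)$ is a homogeneous Finsler nilmanifold precisely when $I_F(M)$ contains a nilpotent Lie subgroup acting transitively on $M$, and analogously $(M,{\hat{\textbf{\textit{a}}}})$ is a homogeneous Riemannian nilmanifold precisely when $I_{\hat{\textbf{\textit{a}}}}(M)$ contains a nilpotent Lie subgroup acting transitively on $M$ (equivalently, by Wilson's theorem and the Riemannian prototype of Theorem~\ref{isometry group-Finsler nilmanifolds}, when $M$ is a simply connected nilpotent Lie group $N$ carrying the left-invariant metric ${\hat{\textbf{\textit{a}}}}$). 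Since $I_F(M)=I_{\hat{\textbf{\textit{a}}}}(M)$, a subgroup $H$ witnesses one condition if and only if it witnesses the other: if $H\leq I_F(M)$ is nilpotent and acts transitively, then the very same $H$ sits inside $I_{\hat{\textbf{\textit{a}}}}(M)$ and is nilpotent and transitive there, and conversely. This delivers both implications simultaneously.

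I do not expect any genuine obstacle here; the corollary is essentially a restatement of Theorem~\ref{isometry group of Randers spaces} once the two definitions are unwound. The only points requiring a word of care are that the identification of isometry groups is topological --- so that ``Lie subgroup'' and ``acts transitively'' transfer verbatim between the Finsler and Riemannian settings --- and that, if one wishes to phrase the conclusion in the ``$N$ simply connected, ${\hat{\textbf{\textit{a}}}}$ left invariant'' form rather than in the isometry-group form, one invokes the Riemannian structure theory of homogeneous nilmanifolds (Wilson's results).
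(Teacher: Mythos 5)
Your argument is correct and is exactly the derivation the paper intends (the corollary is stated without proof, as an immediate consequence of Theorem~3.5): since $X$ is a complete Killing field, that theorem gives $I_F(M)=I_{\hat{\textbf{\textit{a}}}}(M)$, and both notions of homogeneous nilmanifold are defined by the existence of a transitive nilpotent Lie subgroup of the respective isometry group, so the same subgroup witnesses both conditions. No gaps.
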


Proposition \ref{isometry group of Randers spaces} together with theorem 2 of \cite{Wilson} lead to the following corollary, which is very basic in this paper.

\begin{cor}\label{isometry group-Randers nilmanifolds}
Suppose that $(M,F)$ is a homogeneous Finsler nilmanifold such that $F$ is a Randers metric defined by $F(x,y)=\sqrt{{\hat{\textbf{\textit{a}}}}(y,y)}+{\hat{\textbf{\textit{a}}}}(X_x,y)$, where $X$ is $I_{\hat{\textbf{\textit{a}}}}(M)$-invariant. Suppose that $H$ is a nilpotent Lie subgroup of $I_F(M)$ acting transitively on $M$ with $N$ the connected component of the identity in $H$. Then
\begin{enumerate}
  \item $N$ acts simply transitively on $M$ and $N\unlhd G:=I_F(M)$,
  \item $\frak{n}=\frak{nil(g)}$,
  \item For any point $p_0\in M$, $G$ is the semi-direct product of $N$ and the isotropy subgroup $K_{p_0}$,
  \item $N$ is the unique nilpotent subgroup of $G$ acting simply transitively on $G$.
\end{enumerate}
\end{cor}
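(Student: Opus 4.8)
The plan is to read the corollary off from Theorem~\ref{isometry group of Randers spaces} and Theorem~\ref{isometry group-Finsler nilmanifolds} (the latter being the Finslerian generalization of theorem $2$ of \cite{Wilson}), with essentially no extra work. First I would apply Theorem~\ref{isometry group of Randers spaces} to the data of the corollary: since $X$ is, by hypothesis, a complete Killing vector field of the Riemannian manifold $(M,{\hat{\textbf{\textit{a}}}})$, that theorem yields the equality of isometry groups
\[
I_F(M)=I_{\hat{\textbf{\textit{a}}}}(M).
\]
In particular $I_F(M)\subseteq I_{\hat{\textbf{\textit{a}}}}(M)$, so the standing hypothesis of Theorem~\ref{isometry group-Finsler nilmanifolds} holds for this very Riemannian metric ${\hat{\textbf{\textit{a}}}}$.

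Next, observe that all remaining hypotheses of Theorem~\ref{isometry group-Finsler nilmanifolds} are literally those of the corollary: $(M,F)$ is a homogeneous Finsler nilmanifold, and $H$ is a nilpotent Lie subgroup of $I_F(M)$ acting transitively on $M$ with $N$ the identity component of $H$. Hence Theorem~\ref{isometry group-Finsler nilmanifolds} applies and gives, with $G:=I_{\hat{\textbf{\textit{a}}}}(M)$, that $N$ acts simply transitively on $M$ and $N\unlhd G$; that $\frak{n}=\frak{nil(g)}$; that $G$ is the semidirect product of $N$ and the isotropy subgroup $K_{p_0}$ for every $p_0\in M$; and that $N$ is the unique nilpotent subgroup of $G$ acting simply transitively on $G$.

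Finally I would substitute the equality found in the first step. Because $G=I_{\hat{\textbf{\textit{a}}}}(M)=I_F(M)$ as Lie groups, their Lie algebras $\frak{g}$, hence their nilradicals $\frak{nil(g)}$, their normal subgroups, and their isotropy subgroups $K_{p_0}$ all coincide; so the four statements just listed are exactly the four assertions of the corollary once one writes $G=I_F(M)$. The only point worth a line is that phrases such as ``$N\unlhd G$'', ``$\frak{nil(g)}$'', and ``$K_{p_0}\le G$'' are unambiguous here precisely because $I_F(M)$ and $I_{\hat{\textbf{\textit{a}}}}(M)$ are the same group, which is exactly the content of Theorem~\ref{isometry group of Randers spaces}; no genuine obstacle arises, the corollary being a bookkeeping consequence of the two cited results.
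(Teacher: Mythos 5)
Your proposal is correct and matches the paper's intended argument: the paper derives this corollary in one line from Theorem~\ref{isometry group of Randers spaces} (which gives $I_F(M)=I_{\hat{\textbf{\textit{a}}}}(M)$ under the complete Killing hypothesis) together with Wilson's theorem~2, of which Theorem~\ref{isometry group-Finsler nilmanifolds} is the stated generalization you invoke. The only cosmetic difference is that you route through Theorem~\ref{isometry group-Finsler nilmanifolds} rather than citing Wilson directly, and your final substitution $G=I_{\hat{\textbf{\textit{a}}}}(M)=I_F(M)$ is exactly the bookkeeping the paper leaves implicit.
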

Now we use the above results to study Randers Ricci soliton.
\begin{definition}
A Finslerian Ricci soliton $(G,F)$ is named semialgebraic if it's $(1,1)$ Ricci tensor satisfies the equation,
\begin{equation}\label{Finslerian semialgebraic Ricci soliton equation}
    Ric_F=c\mathrm{Id}+\frac{1}{2}(\mathrm{D}+\mathrm{D}^t),
\end{equation}
for some $c\in\mathbb{R}$ and some $\mathrm{D}\in Der(\frak{g})$, where $G$ is a Lie group with Lie algebra $\frak{g}$, $F$ is a left invariant Finsler metric on $G$ and for any $y\in T_eG$, $\mathrm{D}^t$ denotes the transpose of $\mathrm{D}$ with respect to $g_{(e,y)}$.
\end{definition}

\begin{theorem}
Suppose that $F(x,y)=\sqrt{{\hat{\textbf{\textit{a}}}}_x(y,y)}+{\hat{\textbf{\textit{a}}}}_x(X_x,y)$ is a left invariant Randers metric on a simply connected nilpotent Lie group $N$, where ${\hat{\textbf{\textit{a}}}}$ is a left invariant Riemannian metric on $N$ and $X$ is an $I_{\hat{\textbf{\textit{a}}}}(M)$-invariant vector field. If the Ricci flow equation has a unique solution then, $(N,F)$ is a Ricci soliton if and only if $(N,F)$ is a semialgebraic Ricci soliton.
\end{theorem}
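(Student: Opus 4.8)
The plan is to reduce the statement to Lauret's theorem for Riemannian nilmanifolds \cite{Lauret} by passing through the Zermelo navigation description of $F$ (cf. \cite{Bao-Robles-Shen}).

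First I would make two preliminary reductions. Since $F$ is left invariant, so is $X$, and on a simply connected nilpotent Lie group every left invariant vector field is complete; hence $X$ is a complete Killing field of $(N,{\hat{\textbf{\textit{a}}}})$ and Theorem \ref{isometry group of Randers spaces} gives $I_F(N)=I_{\hat{\textbf{\textit{a}}}}(N)$, so that by Corollary \ref{isometry group-Randers nilmanifolds} the group $N$ acts simply transitively in both the Riemannian and the Finslerian pictures and the derivations allowed in \ref{Finslerian semialgebraic Ricci soliton equation} are exactly those of $Der(\mathfrak n)$. Secondly, because $X$ is Killing, $\mathrm{ad}_X$ is skew-symmetric with respect to ${\hat{\textbf{\textit{a}}}}$, while nilpotency of $\mathfrak n$ makes $\mathrm{ad}_X$ nilpotent; a skew-symmetric nilpotent endomorphism of a Euclidean space vanishes, so $X$ lies in the center of $\mathfrak n$, and since $|X|_{\hat{\textbf{\textit{a}}}}$ is constant one gets $\nabla_X X=0$ for the Levi-Civita connection $\nabla$ of ${\hat{\textbf{\textit{a}}}}$. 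Writing $\lambda:=1-|X|^2_{\hat{\textbf{\textit{a}}}}\in(0,1]$, the Randers metric $F$ is the navigation metric of the pair $(h,W)$, where $h:=\lambda\big({\hat{\textbf{\textit{a}}}}-{\hat{\textbf{\textit{a}}}}(X,\cdot)\otimes{\hat{\textbf{\textit{a}}}}(X,\cdot)\big)$ is again a left invariant Riemannian metric on $N$ and $W:=-\lambda^{-1}X$; using $\nabla_X X=0$ one checks $L_W h=0$, so $W$ is a left invariant, central Killing field of $(N,h)$ with $|W|_h<1$.

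The core of the proof is a comparison lemma between the Finslerian invariants of $F$ and the Riemannian invariants of $h$. For a Killing wind the navigation correspondence gives $\mathcal{R}ic(x,y)=\mathrm{Ric}_h\big(y-F(x,y)W_x,\ y-F(x,y)W_x\big)$ for all $(x,y)$, where $\mathrm{Ric}_h$ is the Riemannian Ricci $(0,2)$-tensor of $h$. Differentiating this identity twice in $y$ (using $\tfrac12\partial_{y^i}\partial_{y^j}F^2=g_{ij}$ and $\partial_{y^i}\partial_{y^j}F=\tfrac1F(g_{ij}-\partial_{y^i}F\,\partial_{y^j}F)$) yields an explicit formula for the Akbar-Zadeh tensor $ric_F$, hence for $Ric_F=g_{(x,y)}^{-1}\circ ric_F$, in terms of $\mathrm{Ric}_h$, of $g_{(x,y)}^{-1}\circ h$ and of correction terms linear in $W$ and in the Cartan tensor of $F$. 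The content I want to extract is that, precisely because $X$ is central, these corrections organize themselves: on the soliton side they take the form $L_{V^c}g$ for a left invariant vector field $V$ built from $W$ and can therefore be transferred across \ref{Finslerian Ricci soliton equation}; on the semialgebraic side they amount to replacing an $h$-symmetric derivation by a $g_{(x,y)}$-admissible one and back, where one also uses that $Ric_F$ is automatically $g_{(x,y)}$-self-adjoint, being the $g$-raising of a Hessian. In this way I aim to establish the two equivalences: $(N,F)$ satisfies \ref{Finslerian Ricci soliton equation} if and only if $(N,h)$ satisfies the Riemannian soliton equation \ref{Ricci soliton equation}; and $(N,F)$ satisfies \ref{Finslerian semialgebraic Ricci soliton equation} if and only if $(N,h)$ satisfies Lauret's equation \ref{Riemannian semialgebraic Ricci soliton equation}.

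With the comparison lemma in hand the theorem is immediate. By Lauret's theorem \cite{Lauret}, applied to the simply connected nilpotent Lie group $N$ with the left invariant metric $h$, $(N,h)$ is a Riemannian Ricci soliton if and only if it satisfies \ref{Riemannian semialgebraic Ricci soliton equation}; chaining the equivalences, $(N,F)$ is a Ricci soliton $\Longleftrightarrow$ $(N,h)$ is a Riemannian Ricci soliton $\Longleftrightarrow$ $(N,h)$ satisfies \ref{Riemannian semialgebraic Ricci soliton equation} $\Longleftrightarrow$ $(N,F)$ is a semialgebraic Ricci soliton. The one genuinely delicate step is the comparison lemma: the fundamental tensor $g_{(x,y)}$, the Akbar-Zadeh Ricci tensor and the complete-lift Lie derivative $L_{V^c}g$ of a Randers metric are all honestly $y$-dependent with long coordinate expressions, and the whole point of the statement is that the centrality of $X$ collapses these $y$-dependent pieces into a complete-lift Lie derivative on the soliton side and into an inner modification of the derivation $\mathrm D$ on the semialgebraic side. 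Carrying this out is a careful but essentially mechanical computation with the navigation formulas and with the Bochner identity $\nabla^2_{Y,Z}W=-R^h(W,Y)Z$ valid for the Killing field $W$ of $(N,h)$; the rest of the argument is the chain of equivalences above.
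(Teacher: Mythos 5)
Your route is genuinely different from the paper's, and it contains a gap that is not a matter of routine computation. The paper does not pass through Zermelo navigation or through Lauret's theorem at all: it argues dynamically, taking the self-similar solution $g_t=\tau(t)\tilde{\phi}_t^{\ast}g$ of the flow \ref{Ricci flow equation}, using uniqueness of the solution to keep $\tilde{\phi}_t^{\ast}g$ left invariant, invoking Corollary \ref{isometry group-Randers nilmanifolds} (which rests on Theorem \ref{isometry group of Randers spaces} and Wilson's structure theorem) to replace $\phi_t$ by a one-parameter group of automorphisms $\psi_t=e^{-t\mathrm{D}}$ with $\mathrm{D}\in Der(\frak{n})$, and then differentiating at $t=0$; the converse direction is the observation that a derivation $\mathrm{D}$ generates such a one-parameter family of automorphisms. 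Your preliminary reductions are fine and would be a reasonable start for a navigation-based proof: $X$ left invariant and complete, hence $I_F(N)=I_{\hat{\textbf{\textit{a}}}}(N)$ by Theorem \ref{isometry group of Randers spaces}; $\mathrm{ad}_X$ skew-symmetric and nilpotent, hence zero, so $X$ is central; and $W=-\lambda^{-1}X$ is a Killing field of the navigation metric $h$ with $\Vert W\Vert_h<1$ (this last fact already follows from $\mathcal{L}_X{\hat{\textbf{\textit{a}}}}=0$ and the constancy of $\Vert X\Vert_{\hat{\textbf{\textit{a}}}}$, without needing $\nabla_XX=0$).

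The gap is the ``comparison lemma,'' which is asserted rather than proved and which carries the entire weight of your argument. Even granting the navigation identity $\mathcal{R}ic(x,y)=\mathrm{Ric}_h(y-FW,\,y-FW)$ for a Killing wind (cf. \cite{Bao-Robles-Shen}), differentiating twice in $y$ gives $(ric_F)_{ij}=R_{kl}(\delta_i^k-F_{y^i}W^k)(\delta_j^l-F_{y^j}W^l)-F_{y^iy^j}R_{kl}W^ku^l$ with $u=y-FW$, which is honestly $y$-dependent; moreover $Ric_F=g_{(x,y)}^{-1}\circ ric_F$ and the transpose $\mathrm{D}^t$ in \ref{Finslerian semialgebraic Ricci soliton equation} are taken with respect to the $y$-dependent fundamental tensor. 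You claim both that this equation for $F$ is equivalent to Lauret's equation \ref{Riemannian semialgebraic Ricci soliton equation} for $h$, and that the Finslerian soliton equation \ref{Finslerian Ricci soliton equation} for $F$ is equivalent to the Riemannian equation \ref{Ricci soliton equation} for $h$; neither equivalence is justified, and each is at least as hard as the theorem itself. In particular, the direction ``$(N,F)$ is a Finslerian Ricci soliton $\Rightarrow$ $(N,h)$ is a Riemannian Ricci soliton'' requires extracting a $y$-independent tensor equation from a $y$-dependent one, and your outline offers no mechanism for this --- it is precisely the point at which the paper instead uses homogeneity and the structure of $I_F(N)$ to convert the soliton hypothesis into a statement about automorphisms. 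Until the comparison lemma is stated precisely and proved, with the constants $c$ and the vector fields $V$ matched on both sides, the chain of equivalences does not close.
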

\begin{proof}
Let $F$ be a Ricci soliton. Then, there exists a one-parameter group of diffeomorphisms $\varphi_t$ on $N$ such that $g_t=\tau(t)\tilde{\varphi}^\ast_t g$ is the solution to the Ricci flow
\begin{equation}\label{Ricci flow equation}
    \frac{\partial g_t}{\partial t}=-2 ric_{F_t},
\end{equation}
with $g_0=g$, where $g_t$ denotes the fundamental tensor of $F_t$ and (see \cite{Bao} and \cite{Bidabad-Yarahmadi}).
The uniqueness of the solution shows that $\tilde{\varphi}^\ast_t g$ is also left invariant for any $t$.
Similar to the Riemannian case, fix a point $p\in N$ (see \cite{Jablonski2}). The smooth curve $\varphi_t(p)\subset N$ induces a smooth curve $h(t)\subset G=I_F(N)$ such that
\begin{equation*}
    l_{h(t)}.p=\varphi_t(p)\subset N\simeq G/G_p,
\end{equation*}
where $G_p$ denotes the isotropy at $p$. Let $\psi_t=l_{h(t)^{-1}}\circ\phi_t$. Since, for all $t$, $g_t=\tau(t)\tilde{\varphi}^\ast_t g$ is left invariant so $g_t=\tau(t)\tilde{\psi}^\ast_t g$ is a solution to the Ricci flow. Moreover, $\psi_t$ fixes the point $p$. Suppose that $\sigma\in I_F(N)=G$ is an arbitrary isometry, so we have,
\begin{equation*}
    \frac{\partial}{\partial t}\tilde{\sigma}^\ast g_t=\tilde{\sigma}^\ast\frac{\partial}{\partial t}g_t=-2\tilde{\sigma}^\ast ric_{F_t}=-2ric_{\tilde{\sigma}^\ast F_t}.
\end{equation*}
Therefore $\tilde{\sigma}^\ast g_t$ is a solution of the Ricci flow,
\begin{equation*}
    \frac{\partial}{\partial t}g_t=-2ric_{F_t},
\end{equation*}
with initial condition $\tilde{\sigma}^\ast g_0=g$. Again, the uniqueness of solutions shows that $\tilde{\sigma}^\ast g_t=g_t$, for all $t$. In other words $I_{F_t}(N)=I_F(N)=G$.\\
Let $\sigma\in I_F(N)=G$, so we have,
\begin{equation*}
    \psi_t\circ\sigma\circ\psi_t^{-1}\in I_{F_t}(N)=I_F(N)=G.
\end{equation*}
Hence, for any $\sigma\in G$ we have a smooth family
\begin{equation*}
    \Phi_t(\sigma)=\psi_t\circ\sigma\circ\psi_t^{-1}\in\mathrm{Aut}(G).
\end{equation*}
Easily we can see that $\Phi_t(G_p)=G_p$. Now corollary \ref{isometry group-Randers nilmanifolds} shows that $\psi_t=\phi_t$, where $\phi_t\in \mathrm{Diff}(N)$ is the diffeomorphism defined by $\Phi_t$ on $N$.\\
This means that there exists a one-parameter group $\psi_t$ of automorphisms of $N$ such that
$\tilde{\varphi}^\ast_t g=\tilde{\psi}^\ast_t g$, for all $t$. Now, let $\psi_t={\verb"e"}^{-t\mathrm{D}}$, where $\mathrm{D}\in Der(\frak{n})$. Then for any $(x,y)\in TN$ and $v,w\in\pi^{\ast}TN_{(x,y)}$ we have
\begin{equation*}
    \frac{\partial}{\partial t}|_{t=0}\tilde{\psi}^\ast_t g_{(x,y)}(v,w)=g_{(x,y)}(-(\mathrm{D}+\mathrm{D}^t)v,w).
\end{equation*}
On the other hand we have
\begin{equation*}
    \frac{\partial}{\partial t} g_t|_{t=0}=\tau'(0)g-g(\mathrm{D}+\mathrm{D}^t .,.).
\end{equation*}
Therefore
\begin{equation*}
    Ric_{F}=c\mathrm{Id}+\frac{1}{2}(\mathrm{D}+\mathrm{D}^t),
\end{equation*}
where $c=-\frac{\tau'(0)}{2}$.
\end{proof}

\begin{remark}
Easily we can see every (non-necessarily nilpotent) Finsler Lie group $(G,F)$, satisfying equation \ref{Finslerian semialgebraic Ricci soliton equation}, is a Finslerian Ricci soliton. It is sufficient to use the one-parameter group of automorphisms determined by $\mathrm{D}$.
\end{remark}


\begin{thebibliography}{99}

\bibitem{Akbar-Zadeh} H. Akbar-Zadeh, \emph{Generalized Einstein manifolds}, J. Geom. Phys., \textbf{17}(1995), 342--380.

\bibitem{Bao} D. Bao, \emph{On two curvateure-driven problems in Riemann-Finsler geometry}, Adv. Stud. Pure Math., 48, Math. Soc. Japan, Tokyo, (2007).

\bibitem{Bao-Chern-Shen} D. Bao, S. S. Chern and Z. Shen, \emph{An introduction to Riemann-Finsler geometry}, Springer, Berlin (2000).

\bibitem{Bao-Robles-Shen} D. Bao, C. Robles and Z. Shen, \emph{Zermelo navigation on Riemannian manifolds}, J. Differential Geom., \textbf{66}(2004), 377--435.

\bibitem{Bidabad-Yarahmadi} B. Bidabad and M. Yarahmadi, \emph{On quasi-Einstein Finsler space},
Bull. Iranian Math. Soc., \textbf{40}(2014), No. 4, 921--930.

\bibitem{Chern-Shen} S. S. Chern and Z. Shen, \emph{Riemann-Finsler Geometry}, World Scientific, Singapore, (2005).

\bibitem{Chow-Knopf} B. Chow and D. Knopf, \emph{The Ricci flow: an introduction},
    AMS, Math. Surveys Monogr., \textbf{110}, (2004).

\bibitem{Deng-Book} S. Deng, \emph{Homogeneous Finsler spaces}, Springer, New York, (2012).

\bibitem{Deng-Hou1} S. Deng and Z. Hou, \emph{The group of isometries of a Finsler space},
Pacific J. Math., \textbf{207}(2002), 149--155.

\bibitem{Deng-Hou2} S. Deng and Z. Hou, \emph{Invariant Randers metrics on homogeneous Riemannian
manifolds}, J. Phys. A: Math. Gen., \textbf{37}(2004), 4353--4360.

\bibitem{Hamilton} R. S. Hamilton, \emph{Three-manifolds with positive Ricci curvature}, J. Differential Geom., \textbf{17}(1982), No. 2., 255--306.

\bibitem{Jablonski1} M. Jablonski, \emph{Homogeneous Ricci solitons are algebraic},
Geom. Topol., \textbf{18}(2014), 2477--2486.

\bibitem{Jablonski2} M. Jablonski, \emph{Homogeneous Ricci solitons},
J. reine angew. Math., \textbf{699}(2015), 159--181.

\bibitem{Lauret} J. Lauret, \emph{Ricci soliton homogeneous nilmanifolds},
Math. Ann., \textbf{319}(2001), 715--733.

\bibitem{Randers} G. Randers, \emph{On an asymmetrical metric in the four-space of general relativity},
Phys. Rev., \textbf{59}(1941), 195--199.

\bibitem{Salimi Moghaddam} H. R. Salimi Moghaddam, \emph{The relation between automorphism group
and isometry group of Randers Lie groups}, Results Math., \textbf{61}(2012), 137–-142.

\bibitem{Wilson} E. N. Wilson, \emph{Isometry groups on homogeneous nilmanifolds},
Geom. Dedicata, \textbf{12}(1982), 337--346.

\end{thebibliography}
\end{document}